\newcommand{\CM}{Cohen-Macaulay}
\newcommand{\wrt}{with respect to}
\newcommand{\m}{\mathfrak{m} }
\newcommand{\Z}{\mathbb{Z}  }
\newcommand{\rt}{\rightarrow}
\newcommand{\ov}{\overline}
\newcommand{\F}{\mathbb{F} }
\newcommand{\Fc}{\mathcal{F} }
\newcommand{\G}{\mathbb{G} }
\newcommand{\image}{\operatorname{image}}
\newcommand{\ann}{\operatorname{ann}}
\newcommand{\ini}{\operatorname{in}}
\newcommand{\rank}{\operatorname{rank}}
\newcommand{\coker}{\operatorname{coker}}
\newcommand{\Syz}{\operatorname{Syz}}
\newcommand{\projdim}{\operatorname{projdim}}
\newcommand{\Fitt}{\operatorname{Fitt}}
\theoremstyle{plain}
\newtheorem{theorem}{Theorem}[section]
\newtheorem{lemma}[theorem]{Lemma}
\newtheorem{proposition}[theorem]{Proposition}
\theoremstyle{definition}
\newtheorem{remark}[theorem]{Remark}
\newtheorem{example}[theorem]{Example}
\newtheorem{construction}[theorem]{Construction}
\theoremstyle{remark}
\begin{document}

\title[Pure resolutions]{On associated graded modules having a pure resolution}
 \author{Tony J. Puthenpurakal}
\date{\today}
\address{Department of Mathematics, Indian Institute of Technology Bombay, Powai, Mumbai 400 076, India}
\email{tputhen@math.iitb.ac.in}
\subjclass{Primary 13A30; Secondary 13D02}
\keywords{associated graded ring, pure resolutions}
\begin{abstract}
Let $A = K[[X_1,\cdots,X_n]]$ and let $\m = (X_1,\cdots,X_n)$. Let $M$ be a \CM \ $A$-module of codimension $p$. In this paper we give a necessary and sufficient condition for the associated graded module $G_\m(M)$ to have a pure resolution over the polynomial ring $G_\m(A) \cong K[X_1,\cdots,X_n]$.
\end{abstract}

\maketitle

\section{introduction}
Let $R = K[X_1,\cdots, X_n]$ and let $M$ be a finitely generated graded $R$-module of projective dimension $p$. Recall that $M$ has a \textit{pure resolution } of type $(d_0,d_1,\cdots, d_p)$ if the minimal resolution of $M$ is of the following form:
\[
0                                  \rt R(-d_p)^{\beta_p}  \rt   \cdots     \rt  R(-d_2)^{\beta_2}  \rt              R(-d_1)^{\beta_1}  \rt R(-d_0)^{\beta_0} \rt 0.
\]
Herzog and K\"{u}hl showed in \cite{HK} that the Betti numbers of a pure resolution of a \CM \
algebra are determined by its type and Huneke and Miller computed in \cite{HM} the multiplicity of
such an algebra, also in terms of its type. These two results can be extended to \CM \
$R$-modules, see \cite[p.\ 88]{BS}.  In the beautiful paper \cite{BS},   Boig  and S\"{o}derberg conjectured that 
the Betti diagram of any \CM \  $R$-module is a non-negative
linear combination of pure diagrams; furthermore, any pure diagram is a rational multiple of
the Betti diagram of some \CM \  $R$-module. This conjecture was proved in \cite{ES}. 

Let $A = K[[X_1,\ldots,X_n]]$ and let $\m = (X_1,\cdots,X_n)$.  Let $G_\m(A)$ be the associated graded ring of $A$ \wrt \ $\m$, i.e., 
 $G_\m(A) = \bigoplus_{n \geq 0} \m^n/\m^{n+1}$.  It is well-known that $G_\m(A) \cong K[X_1,\cdots,X_n]$. Let $M$ be a finitely generated $A$-module and let $G_\m(M) = \bigoplus_{n\geq 0} \m^n M/\m^{n+1} M$ be the associated graded module of $M$ with respect to $\m$. A natural question is when does $G_\m(M)$ have a pure resolution? To answer this question we construct a pure complex attached to $M$ as follows:
\begin{construction}
(1). Let $\phi \colon A^n \rt A^m$ be a non-zero $A$-linear map. Assume $\image \phi \subseteq \m^s A^m$ but $\image \phi \nsubseteq \m^{s+1} A^m$ for some $s \geq 0$. Let $\phi = (a_{ij})$ where $a_{ij} \in A$. By assumption $a_{ij} \in \m^s$.  It follows that
\[
\phi = \sum_{j \geq s} \phi_j
\]
where $\phi_j$ is a matrix with entries homogeneous forms of degree $j$.
Set $\ini(\phi) = \phi_s$.
We call $\ini(\phi)$ to be the \emph{initial form} of $\phi$. Set   $ v(\phi) = s $; the \emph{order} of $\phi$. 

\begin{example}
Let 
\[
\phi = \left(
       \begin{matrix}
       X_1^2 + X_2^2X_3 & 0 & X_3^2 \\
       0 & X_2^4 & X_1X_3 + X_2^3 \\
       X_1^3 & 0 & X_1X_2X_3 + X_2^4
       \end{matrix}
       \right).
\]
Then 
\[
\ini(\phi) =  \left(
       \begin{matrix}
       X_1^2  & 0 & X_3^2 \\
       0 & 0 & X_1X_3  \\
      0 & 0 & 0
       \end{matrix}
       \right).
\]
Also $v(\phi) = 2$. 
\end{example}

(2).  Let  
$$\F \colon 0 \rt F_p \xrightarrow{\phi_p} F_{p-1} \rt \cdots \rt F_2 \xrightarrow{\phi_2}  F_1 \xrightarrow{\phi_1}  F_0 \rt 0,$$ 
be a minimal resolution of $M$. Set $c_i = v(\phi_i)$ for $i = 1, 2, \cdots, p$. Let   $d_i = \sum_{j = 1}^{i}c_j$  for $i = 1, 2, \cdots, p$.  Let $\beta_i  = \rank F_i$ be the $i^{th}$ betti-number of $M$. It is easily shown  (see 2.3) that
we have a complex, $\ini(\F)$,
\[
   0 \rt R(-d_p)^{\beta_p} \xrightarrow{\ini(\phi_{p})} R(-d_{p-1})^{\beta_{p-1}} \rt
\cdots \rt R(-d_1)^{\beta_1} \xrightarrow{\ini(\phi_{1})}
R^{\beta_0} \rt 0.
\]
We also have an augumentation map $\epsilon \colon \ini(\F) \rt G_\m(M)$, i.e., an $R$-linear map $\epsilon \colon R^{\beta_0} \rt G_\m(M)$ such that $\epsilon \circ \ini(\phi_1) = 0$. Furthermore $\epsilon$ is surjective.
If $\G$ is another minimal resolution of $M$ then it can be easily shown that $\ini(\F) \cong \ini(\G)$ as  augumented complexes, see 2.3.
\end{construction}

Our first result is:
\begin{theorem}\label{first}
Let $M$ be a finitely generated $A$-module. Assume $G_\m(M)$ has a pure resolution. Let $\F$ be a minimal free resolution of $M$. Then $\ini(\F)$ is a minimal free resolution of $G_\m(M)$.
\end{theorem}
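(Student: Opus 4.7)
The plan is to compare $\ini(\F)$ with the minimal pure resolution $\G\colon 0 \to R(-e_q)^{\gamma_q} \to \cdots \to R(-e_1)^{\gamma_1} \to R^{\gamma_0} \to G_\m(M) \to 0$ of $G_\m(M)$, of type $(0 = e_0 < e_1 < \cdots < e_q)$, term by term. As a first observation, since $\F$ is minimal each $\phi_i$ has entries in $\m$, so $c_i = v(\phi_i) \geq 1$ and the entries of $\ini(\phi_i)$ lie in $R_+$; hence minimality of $\ini(\F)$ is automatic and the content of the theorem is exactness.

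Using acyclicity of $\G$ and projectivity of each $\ini(\F)_i$, I construct a chain map $\nu \colon \ini(\F) \to \G$ lifting the identity on $G_\m(M)$. At level zero, $\nu_0 \colon R^{\beta_0} \to R^{\gamma_0}$ lifts the identity on $M/\m M$ between two minimal covers of equal rank $\beta_0 = \gamma_0 = \dim_K M/\m M$, and is therefore an isomorphism. I proceed by induction, asserting simultaneously at step $i$ that $d_i = e_i$, $\beta_i = \gamma_i$, that $\ini(\F)$ is exact at position $i-1$, and that $\nu_i$ is an isomorphism.

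For the base case $i = 1$: pick any nonzero $\bar{s} = (\bar{a}_1,\ldots,\bar{a}_{\beta_0}) \in (\ker\epsilon)_{e_1}$ and lift each $\bar{a}_j \in R_{e_1}$ to $a_j \in \m^{e_1}$; the syzygy relation forces $\sum_j a_j m_j \in \m^{e_1+1}M = \sum_j \m^{e_1+1}m_j$, and correcting the $a_j$ by elements of $\m^{e_1+1}$ produces a true syzygy $(a'_j) \in \Syz_1(M)$ with the same initial form $\bar{s}$. Writing $(a'_j) = \sum_k r_k \phi_1(e'_k)$ as an $A$-combination of the columns of $\phi_1$: if $c_1 > e_1$ then $(a'_j) \in \m^{e_1+1}F_0$, contradicting $\bar{s}\neq 0$, so $c_1 \leq e_1$, whence $d_1 \leq e_1$. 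Combined with $d_1 \geq e_1$ (from the non-vanishing graded map $\nu_1$), this gives $d_1 = e_1$. Taking degree-$e_1$ initial forms then expresses $\bar{s} = \sum_k r_k(0)\ini(\phi_1)(e'_k)$ as a $K$-combination of the columns of $\ini(\phi_1)$; since $\ker\epsilon$ is generated in degree $e_1$ by the pure hypothesis, this yields $\image \ini(\phi_1) = \ker\epsilon$, i.e., exactness at position $0$. A Nakayama argument on $g_1 \nu_1 = \nu_0 \ini(\phi_1)$ then shows $\nu_1$ surjective, and combined with the upper semi-continuity inequality $\gamma_1 \geq \beta_1$ from the Rees flat family deforming $A$ to $G_\m(A)$, this forces $\beta_1 = \gamma_1$ and $\nu_1$ an isomorphism.

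The inductive step for $i \geq 2$ follows the same template applied to the $i$-th syzygy module $\Omega^i M \subseteq F_{i-1}$. The main obstacle is that the correction step---passing from an element $\bar{x}$ with $\ini(\phi_{i-1})(\bar{x}) = 0$ to a true element of $\ker\phi_{i-1} = \Omega^i M$---requires a strictness property for $\phi_{i-1}$, namely that the induced $\m$-adic filtration on $\Omega^{i-1}M \subseteq F_{i-2}$ agrees with the image filtration from $\phi_{i-1}$. This strictness is precisely the equality $G^{\mathrm{ind}}(\Omega^{i-1}M) = \image\ini(\phi_{i-1})$ inside $R^{\beta_{i-2}}$, forced by the inductive hypothesis that $\ini(\F)$ is exact at position $i-2$ (which sandwiches $G^{\mathrm{ind}}(\Omega^{i-1}M)$ between $\image \ini(\phi_{i-1})$ and $\ker \ini(\phi_{i-2})$). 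Given this strictness, the $i=1$ argument adapts directly to yield $d_i = e_i$, exactness at position $i-1$, and $\nu_i$ an isomorphism, completing the induction.
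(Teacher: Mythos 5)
Your argument is correct in outline but takes a genuinely different route from the paper's. The paper inducts on $\projdim M$ and never compares $\ini(\F)$ term by term with the minimal resolution of $G_\m(M)$: its key lemma (Lemma~\ref{f-l}) shows, using only that $\Syz_1^R(G_\m(M))$ is generated in a single degree $s$, that the induced filtration on $N=\Syz_1^A(M)$ satisfies $N\cap\m^iF_0=\m^{i-s}N$ (proved via Artin--Rees stability plus a descending Nakayama induction); this identifies $\ker\epsilon$ with $G_\m(N)(-s)$, yields $\mu(\ker\epsilon)=\mu(N)=\beta_1$ for free, and reduces the theorem to $G_\m(N)$, which again has a pure resolution. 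You instead lift the identity to a chain map $\nu\colon\ini(\F)\to\G$ and run a simultaneous induction on homological position; the filtration identity the paper extracts from Artin--Rees is, in your version, exactly the strictness of $\phi_{i-1}$, and your observation that this strictness is sandwiched out of exactness at position $i-2$ (via $\image\ini(\phi_{i-1})\subseteq G^{\mathrm{ind}}(\Omega^{i-1}M)\subseteq\ker\ini(\phi_{i-2})$) is correct and is what closes the induction. The trade-offs: you import the semicontinuity $\beta_i^R(G_\m(M))\ge\beta_i^A(M)$ (classical but external) to get $\beta_i=\gamma_i$, where the paper gets $\mu(\ker\epsilon)=\mu(N)$ directly from its filtration identity; on the other hand your degree-$e_1$ lifting argument at the first step avoids Artin--Rees entirely. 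Two points you should write out rather than leave implicit: (i) for $i\ge 2$ you need the identification $\ker\ini(\phi_{i-1})=\nu_{i-1}^{-1}(\image g_i)$, so that this kernel is generated in degree $e_i$, before the lifting argument applies; (ii) you never address the top of the resolution --- one must rule out $q\ne p$ and prove injectivity of $\ini(\phi_p)$. Both do follow from your machinery (injectivity from $\ker\ini(\phi_p)=\nu_p^{-1}(\ker g_p)=0$ once $q=p$; and if $q>p$, a nonzero class of degree $e_{p+1}$ in $\ker\ini(\phi_p)$ would correct, via strictness of $\phi_p$, to a nonzero element of $\ker\phi_p=0$, a contradiction), but they are not automatic and the induction as stated does not terminate the argument.
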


Theorem \ref{first} gives a necessary condition for $G_\m(M)$ to have a pure resolution.
The following result gives a sufficient condition for $G_\m(M)$ to have a pure resolution.

\begin{theorem}\label{pure}
Let $M$ be a \CM \ $A$-module and let $p = \projdim M$. Let $\beta_i = \beta_i(M)$.
Let $\F$ be a minimal resolution of $M$.  The following conditions are equivalent
\begin{enumerate}[\rm (i)]
\item
$G_\m(M)$ has a pure resolution.
\item
The following hold 
\begin{enumerate}[\rm (a)]
\item
$\ini(\F)$ is acyclic.
\item
For $i \geq 1$,
\[
\beta_i  =  (-1)^{i+1}\beta_0 \prod_{\substack{1 \leq j \leq p \\  j\neq i}} \frac{d_j}{d_j - d_i}.
\]
\item
The multiplicity of $M$,
\[
e_0(M)  =  \frac{\beta_0}{p!} \prod_{i = 1}^{p}d_i.
\]
\end{enumerate}
\end{enumerate}
\end{theorem}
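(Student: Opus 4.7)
The plan is to treat the two directions separately, with Theorem~\ref{first} providing the bridge for (i) $\Rightarrow$ (ii). Suppose first that $G_\m(M)$ has a pure resolution. By Theorem~\ref{first}, $\ini(\F)$ is a minimal free resolution of $G_\m(M)$, so in particular $\ini(\F)$ is acyclic, proving (a). Moreover $\projdim_R G_\m(M) = p$, and since $M$ is \CM \ we have $\dim G_\m(M) = \dim M = n-p$; Auslander-Buchsbaum then yields $\depth_R G_\m(M) = n-p$, so $G_\m(M)$ is a \CM \ graded $R$-module with pure resolution of type $(d_0,\dots,d_p)$. The Herzog-K\"uhl formulas give (b) and the Huneke-Miller formula gives (c).

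For the reverse direction, set $N := \coker \ini(\phi_1) = H_0(\ini(\F))$. Because $\ini(\F)$ is acyclic and each $\ini(\phi_i)$ has entries of strictly positive degree (as $c_i = v(\phi_i) \geq 1$ by minimality of $\F$), $\ini(\F)$ is a minimal pure free resolution of $N$ of type $(d_0,\dots,d_p)$ with $0 = d_0 < d_1 < \cdots < d_p$. The augmentation $\epsilon$ factors through $N$ as a surjection $\pi \colon N \twoheadrightarrow G_\m(M)$; it suffices to show $\pi$ is an isomorphism.

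I would first argue that $N$ is \CM \ of dimension $n-p$ with $e_0(N)=e_0(G_\m(M))$. The pure resolution yields
\[
H_N(t) \;=\; \frac{\sum_{i=0}^{p}(-1)^i \beta_i\, t^{d_i}}{(1-t)^n},
\]
and the Herzog-K\"uhl relations (which are precisely what (b) encodes, given the strictly increasing $d_i$) force $(1-t)^p$ to divide the numerator, so $H_N(t) = Q(t)/(1-t)^{n-p}$ for some polynomial $Q$. Since $\pi$ surjects $N$ onto $G_\m(M)$ and $\dim G_\m(M) = n-p$, we have $\dim N \geq n-p$, whence $Q(1)\neq 0$, $\dim N = n-p$, and $e_0(N) = Q(1)$. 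A direct expansion (the Huneke-Miller computation) gives $Q(1) = \beta_0 \prod_{i=1}^{p} d_i / p!$, which by (c) matches $e_0(M) = e_0(G_\m(M))$. Since $\projdim_R N = p$, Auslander-Buchsbaum yields $\depth_R N = n-p = \dim N$, so $N$ is \CM.

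Finally, consider the short exact sequence $0 \to K \to N \xrightarrow{\pi} G_\m(M) \to 0$. Since $\dim N = \dim G_\m(M) = n-p$ and $e_0(N) = e_0(G_\m(M))$, additivity of multiplicity forces $\dim K < n-p$. But $N$ being \CM \ means every associated prime of $N$ has dimension $n-p$, so $N$ has no nonzero submodule of smaller dimension; hence $K=0$ and $\pi$ is an isomorphism, identifying $\ini(\F)$ as a pure resolution of $G_\m(M)$. The main obstacle is establishing the \CM \ property of $N$ together with the matching of multiplicities via (b) and (c); once that is in hand, the rigidity of \CM \ modules under surjections with equal dimension and multiplicity makes the desired isomorphism automatic.
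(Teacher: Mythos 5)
Your proof is correct and follows essentially the same route as the paper: Theorem~\ref{first} plus the Herzog--K\"{u}hl/Huneke--Miller formulas give the forward direction, and for the converse you show $N=\coker\ini(\phi_1)$ is \CM\ of the same dimension and multiplicity as $G_\m(M)$, so the kernel of the induced surjection must vanish by additivity of multiplicity and unmixedness. The only difference is cosmetic: you inline the Hilbert-series proof of the Herzog--K\"{u}hl criterion and the Huneke--Miller multiplicity computation, where the paper cites these results directly.
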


If $M  = A /I$  then using a computer algebra program one can find  $G_\m(M)$. However if $M$ is not a cyclic module there is no computer algebra program to find a presentation of
$G_\m(M)$ as a $R$-module.  My motivation for this work was to find non-trivial examples of  a minimal resolution of $G_\m(M)$. Even when $p = \projdim M  = 1,2$ this is a non-trivial problem even when $G_\m(M)$ is \CM. 
  
Here is an overview of the contents of the paper. In section two we give our construction of the complex $\ini(\F)$. In section three we prove Theorem \ref{first}. In the next section we prove Theorem \ref{pure}. Finally in section five we give an non-trivial example of $G_\m(M)$ having a pure resolution.

\section{Construction of the complex $\ini(\F)$}
Throughout $A = k[[X_1,\ldots,X_n]]$ and let  $\m $ be the maximal ideal of $A$. All $A$-modules considered  will be finitely generated.
Let $R = G_\m(A) = \bigoplus_{n \geq 0} \m^n/\m^{n+1}$ be the associated graded ring of $A$. It is well-known that $R \cong k[X_1,\ldots,X_n]$. 
Let $M$ be an $A$-module and let $G_\m(M) = \bigoplus_{n \geq 0}\m^{n}M/\m^{n+1}M$ be the associated graded module of $M$. Clearly $G_\m(M)$ is a finitely generated $R$-module. 
Let
$\F$ be a minimal resolution of $M$. In this section we construct our complex $\ini(\F)$ of $R$-modules. We also show that there is an augumentation $\epsilon \colon \ini(\F) \rt  G_\m(M)$ with $\epsilon$ surjective.

\begin{proposition}\label{basic-prop}
Let $F_2 \xrightarrow{\phi_2} F_1 \xrightarrow{\phi_1} F_0$ be a complex of free $A$-modules. Let $v(\phi_1) = c_1$ and $v(\phi_2) = c_2$. Set $d_1 = c_1$ and $d_2 = c_1 + c_2$.  Then 
\begin{enumerate}[\rm(1)]
\item
we have a complex of $R = G_\m(A)$-modules
\[
G_\m(F_2)(-d_2) \xrightarrow{\ini(\phi_2)} G_\m(F_1)(-d_1) \xrightarrow{\ini(\phi_1)}
G_\m(F_0).
\]
\item
Let $M = \coker \phi_1$. Consider the natural map $\epsilon \colon G_\m(F_0) \rt G_\m(M)$. Then
\begin{enumerate}[\rm (a)]
\item
$\epsilon \circ \ini(\phi_1) = 0$.
\item
$\epsilon$ is surjective.
\end{enumerate}
\end{enumerate}
\end{proposition}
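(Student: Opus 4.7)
The plan is as follows. First, I would verify that each $\ini(\phi_i)$ induces a well-defined, degree-zero, $R$-linear map $G_\m(F_i)(-d_i) \to G_\m(F_{i-1})(-d_{i-1})$. The key point is that since the entries of $\phi_i$ lie in $\m^{c_i}$, the map $\phi_i \colon F_i \to F_{i-1}$ sends $\m^j F_i$ into $\m^{j+c_i} F_{i-1}$ for every $j \geq 0$. Thus $\phi_i$ induces an $R$-linear map on the associated graded modules that raises degree by $c_i$. Writing $\phi_i = \sum_{k \geq c_i} \phi_{i,k}$ with each $\phi_{i,k}$ a matrix of forms of degree $k$, one checks directly that for $\bar{x} \in \m^j F_i/\m^{j+1}F_i$ the induced map sends $\bar{x}$ to $\overline{\phi_{i,c_i}(x)} \in \m^{j+c_i}F_{i-1}/\m^{j+c_i+1}F_{i-1}$, which is precisely the action of $\ini(\phi_i) = \phi_{i,c_i}$. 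With the shifts $(-d_i)$ and $(-d_{i-1})$ built in, and because $d_i - d_{i-1} = c_i$, this map has degree $0$.

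Next, I would verify that $\ini(\phi_1) \circ \ini(\phi_2) = 0$. There are two natural approaches. The first is purely formal: expanding $\phi_i = \sum_{k \geq c_i}\phi_{i,k}$ as above, the identity $\phi_1\phi_2 = 0$ in $A$, after grouping summands of the same form-degree, gives $\sum_{i+k = j}\phi_{1,i}\phi_{2,k} = 0$ as a matrix of homogeneous polynomials of degree $j$, for every $j$. The smallest nontrivial degree is $j = c_1 + c_2$, yielding $\phi_{1,c_1}\phi_{2,c_2} = 0$, that is, $\ini(\phi_1)\ini(\phi_2) = 0$. The second, more conceptual approach is to trace through Step 1: for $\bar{x} \in G_\m(F_2)$ in degree $j$, one obtains $\ini(\phi_1)\ini(\phi_2)(\bar{x}) = \overline{\phi_1(\phi_2(x))} = 0$ since $\phi_1\phi_2 = 0$ on $F_2$.

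Finally, for part (2), the natural surjection $F_0 \twoheadrightarrow M$ sends $\m^j F_0$ onto $\m^j M$ for every $j$, hence induces a surjection of graded $R$-modules $\epsilon \colon G_\m(F_0) \to G_\m(M)$; this takes care of (b). For (a), given $\bar{x} \in (G_\m(F_1))_j$, the calculation from Step 1 yields $\ini(\phi_1)(\bar{x}) = \overline{\phi_1(x)} \in (G_\m(F_0))_{j+c_1}$, and since $\phi_1(x) \in \image \phi_1$ maps to $0$ in $M = \coker \phi_1$, we get $\epsilon(\overline{\phi_1(x)}) = 0$.

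There is no serious obstacle here: the entire content is bookkeeping between $A$ and its associated graded ring $R$. The step requiring the most care is Step 1, where the identification of the induced map on associated graded modules with multiplication by the initial-form matrix must be made precise, since $\ini(\phi_i)$ is a matrix with entries in the graded ring $R$, while $\phi_i$ is a matrix over $A$; once that identification is nailed down, parts (1) and (2) follow immediately.
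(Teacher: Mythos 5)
Your proposal is correct and follows essentially the same route as the paper: both arguments reduce to the observation that $\phi_i$ carries $\m^{j}F_i$ into $\m^{j+c_i}F_{i-1}$, so that passing to the $\m$-adic filtration quotients yields degree-zero maps of graded modules identified with the initial-form matrices, after which the vanishing of the composite and the surjectivity of $\epsilon$ are immediate. The paper states this at the level of the filtered complex $\m^{i-d_2}F_2 \rt \m^{i-d_1}F_1 \rt \m^i F_0 \rt \m^i M \rt 0$ and you do it element-wise, but the content is identical.
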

\begin{proof}
If $E$ is an $A$-module then set $\m^i E = E$ for $i \leq 0$. Notice  that for all $i \in \Z$ we have an complex
\[
 \m^{i-d_2}F_2 \rt \m^{i-d_1}F_1 \rt \m^{i}F_0 \rt \m^i M \rt 0.
\]
After tensoring with $A/\m$ and collecting terms 
 we have a complex
\[
G_\m(F_2)(-d_2) \xrightarrow{\alpha} G_\m(F_1)(-d_1) \xrightarrow{\beta}
G_\m(F_0) \xrightarrow{\epsilon} G_\m(M) \rt 0. 
\]
Observe that $\alpha = \ini(\phi_2)$ and $\beta = \ini(\phi_1)$. Also clearly $\epsilon$ is surjective.
\end{proof}

Next we show
\begin{proposition}\label{comp}
Let $\F \colon F_2 \xrightarrow{\phi_2} F_1 \xrightarrow{\phi_1} F_0$  and  $\G \colon G_2 \xrightarrow{\psi_2} F_1 \xrightarrow{\psi_1} F_0$  be  complexes of free $A$-modules. Suppose we have a commutative diagram
\[
\xymatrixrowsep{3pc}
\xymatrixcolsep{2.5pc}
\xymatrix{
     F_2 \ar@{->}[r]^{\phi_2}
     \ar@{->}[d]^{\theta_2}
&    F_1 \ar@{->}[r]^{\phi_1}
     \ar@{->}[d]^{\theta_1}
&   F_0    \ar@{->}[d]^{\theta_0}
\\
 G_2 \ar@{->}[r]^{\psi_2}
&    G_1 \ar@{->}[r]^{\psi_1}
&   G_0   
}
\]
such that $\theta_i$ are isomorphism's for $i = 0, 1, 2$.
Then
\begin{enumerate}[\rm (1)]
\item
$v(\phi_i) = v(\psi_i)$ for $i = 1, 2$.
\item
Let $v(\phi_1) = c_1$ and $v(\phi_2) = c_2$. Set $d_1 = c_1$ and $d_2 = c_1 + c_2$. 
Then we have a commutative diagram of $R = G_\m(A)$-modules
\[
\xymatrixrowsep{3pc}
\xymatrixcolsep{2.5pc}
\xymatrix{
     G_\m(F_2)(-d_2) \ar@{->}[r]^{\ini(\phi_2)}
     \ar@{->}[d]^{\ini(\theta_2)}
&    G_\m(F_1)(-d_1) \ar@{->}[r]^{\ini(\phi_1)}
     \ar@{->}[d]^{\ini(\theta_1)}
&   G_\m(F_0)    \ar@{->}[d]^{\ini(\theta_0)}
\\
 G_\m(G_2)(-d_2) \ar@{->}[r]^{\ini(\psi_2)}
&   G_\m(G_1)(-d_1) \ar@{->}[r]^{\ini(\psi_1)}
&   G_\m(G_0)   
}
\]
such that 
\begin{enumerate}[\rm (a)]
\item
The rows are complexes of $R$-modules.
\item
$\ini(\theta_i)$ is an isomorphism for $i = 0,1,2$.
\end{enumerate}
\item 
Set $M = \coker \phi_1$ and $M^\prime = \coker \phi_2$ and let $\xi \colon M \rt M^\prime$ be the isomorphism induced by the above commutative diagram.  Then we have a commutative diagram
\[
\xymatrixrowsep{3pc}
\xymatrixcolsep{2.5pc}
\xymatrix{
     G_\m(F_0) \ar@{->}[r]^{\epsilon}
     \ar@{->}[d]^{\ini(\theta_0)}
&   G_\m(M)    \ar@{->}[d]^{G_\m(\xi)} \ar@{->}[r]
&0
\\
 G_\m(G_0) \ar@{->}[r]^{\epsilon^\prime}
&   G_\m(M^\prime)  \ar@{->}[r]
&0 
}
\]
\end{enumerate} 
\end{proposition}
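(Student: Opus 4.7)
The plan is to reduce the entire statement to one structural fact: an $A$-linear isomorphism $\theta \colon A^n \to A^n$ between free modules of equal rank is represented, in chosen bases, by a matrix invertible over $A$, so its reduction modulo $\m$ is invertible over $k$. Consequently $v(\theta) = 0$ and $\ini(\theta)$ is a degree-$0$ isomorphism of graded free $R$-modules (given by the mod-$\m$ matrix). Indeed, if $v(\theta) \geq 1$ then every entry of the matrix lies in $\m$, forcing $\det \theta \in \m$ and contradicting invertibility over $A$.

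For part (1), I would expand each map into homogeneous components
\[
\phi_i = \phi_i^{(c_i)} + \phi_i^{(c_i+1)} + \cdots, \quad \psi_i = \psi_i^{(c_i')} + \psi_i^{(c_i'+1)} + \cdots, \quad \theta_j = \theta_j^{(0)} + \theta_j^{(1)} + \cdots,
\]
where $c_i = v(\phi_i)$, $c_i' = v(\psi_i)$, and each superscripted piece is a matrix of homogeneous forms. Expanding the commutativity $\theta_{i-1}\phi_i = \psi_i\theta_i$ and comparing lowest-order components on each side yields
\[
\theta_{i-1}^{(0)}\phi_i^{(c_i)} = \psi_i^{(c_i')}\theta_i^{(0)}.
\]
Since $\theta_{i-1}^{(0)}$ and $\theta_i^{(0)}$ are $k$-isomorphisms, neither side is zero, so degree matching forces $c_i = c_i'$. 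This establishes (1), and the identity above reads $\ini(\theta_{i-1})\ini(\phi_i) = \ini(\psi_i)\ini(\theta_i)$, which is precisely the commutativity of each square in (2).

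The remaining parts of (2) then follow cheaply: with $d_i = c_1 + \cdots + c_i$ now common to both rows, Proposition \ref{basic-prop}(1) applied separately to $\F$ and to $\G$ gives that the rows are complexes, and the opening structural fact gives that each $\ini(\theta_i)$ is an isomorphism. For (3), commutativity of the original diagram ensures $\theta_0(\phi_1 F_1) \subseteq \psi_1 G_1$, so $\theta_0$ descends to an $A$-isomorphism $\xi \colon M \to M'$. Applying the functor $G_\m(-)$ to the commutative square with surjections $F_0 \twoheadrightarrow M$, $G_0 \twoheadrightarrow M'$ and vertical maps $\theta_0, \xi$ yields the requested diagram, once we note that $\ini(\theta_0) = G_\m(\theta_0)$ because $v(\theta_0) = 0$.

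The main obstacle will be the degree-matching argument in part (1). It depends crucially on $\ini(\theta_j)$ being not merely nonzero but in fact a $k$-isomorphism; without that, the matching of lowest-order parts could be trivially zero on both sides and would not pin down the orders. Once the structural fact about $\theta_j$ is secured, the rest is just naturality of the associated-graded construction.
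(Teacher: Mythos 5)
Your proposal is correct and follows essentially the same route as the paper: the key step in both is that an $A$-linear automorphism has invertible degree-zero part (your determinant remark is just an alternative justification of the paper's reduction-mod-$\m$ argument), and then comparing lowest-order homogeneous components of $\theta_{i-1}\phi_i = \psi_i\theta_i$ gives both the equality of orders and the commutativity of the graded squares. The remaining points (rows are complexes via Proposition~\ref{comp}'s predecessor, and part (3) by functoriality) are handled the same way in the paper.
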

\begin{proof}
(0) Let $\delta \colon A^m \rt A^m$ be invertible $A$-linear map. We write $\delta = \sum_{j\geq 0} \delta_j$ where $\delta_j$ is a $m\times m$ matrix of forms of degree $j$. As $\delta \otimes A/\m \colon k^m \rt k^m$ is an isomorphism we get that $\delta_0 $ is an invertible matrix. Also notice $\delta_0 = \ini(\delta)$.

(1) and (2): Let $r = v(\phi_1)$. We write $\phi_1 = \sum_{j\geq r} \phi_{1,j}$ where $\phi_{1,j}$ is a matrix of forms of degree $j$. Let $s = v(\psi_1)$. Write $\psi_1 = \sum_{j\geq s} \psi_{1,j}$ where $\psi_{1,j}$ is a matrix of forms of degree $j$.
For $i = 0, 1$ write $\theta_i = \sum_{j\geq 0} \theta_{i,j}$ as before. As 
$\theta_0 \circ \phi_1 = \psi_1 \circ \theta_1$ we get that $\theta_{0,0} \circ \phi_{1,r} = \psi_{1,s}\circ \theta_{1,0}$. By (0) we get that $\theta_{0,0}$ and $\theta_{1,0}$
are invertible matrices of constants. So $r = s$. We also get 
$\ini(\theta_0) \circ \ini(\phi_1) = \ini(\psi_1)\circ \ini(\theta_1)$.
Similarly we get $v(\phi_2) = v(\psi_2)$ and 
$\ini(\theta_1) \circ \ini(\phi_2) = \ini(\psi_2)\circ \ini(\theta_2)$.

2(a) This follows from Proposition \ref{basic-prop}.

2(b) This follows from (0).

(3) This is obvious.
\end{proof}

\begin{construction}
 Let  
$$\F \colon 0 \rt F_p \xrightarrow{\phi_p} F_{p-1} \rt \cdots \rt F_2 \xrightarrow{\phi_2}  F_1 \xrightarrow{\phi_1}  F_0 \rt 0,$$ 
be a minimal resolution of $M$. Set $c_i = v(\phi_i)$ for $i = 1, 2, \cdots, p$. Let   $d_i = \sum_{j = 1}^{i}c_j$  for $i = 1, 2, \cdots, p$.  Let $\beta_i  = \rank F_i$ be the $i^{th}$ betti-number of $M$. Note $R^\beta_i = G_\m(F_i)$.  By Proposition \ref{basic-prop} it follows that 
we have a complex, $\ini(\F)$,
\[
   0 \rt R(-d_p)^{\beta_p} \xrightarrow{\ini(\phi_{p})} R(-d_{p-1})^{\beta_{p-1}} \rt
\cdots \rt R(-d_1)^{\beta_1} \xrightarrow{\ini(\phi_{1})}
R^{\beta_0} \rt 0.
\]
By Proposition \ref{basic-prop}  we also have an augumentation map $\epsilon \colon \ini(\F) \rt G_\m(M)$, i.e., an $R$-linear map $\epsilon \colon R^{\beta_0} \rt G_\m(M)$ such that $\epsilon \circ \ini(\phi_1) = 0$. Furthermore $\epsilon$ is clearly surjective.  If $\G$ is another minimal resolution of $M$ then by Proposition \ref{comp} it follows that we have an isomorphism of augumented complexes $\ini(\F)$ and $\ini(\G)$.
\end{construction}
\section{Proof of Theorem \ref{first}}
In this section we give a proof of Theorem \ref{first}.
We first prove:
\begin{lemma}\label{f-l}
Let $F_1 \xrightarrow{\phi} F_0 \rt M \rt 0$ be part of a minimal resolution of $M$. Assume that the minimal resolution of $G_\m(M)$ has the following form $\cdots \rt R^{a}(-s) \rt G_\m(F_0) \rt G_\m(M) \rt 0$.  Set $N = \image \phi$ and $\Fc = \{ N_i = \m^iF_0 \cap N \}_{i \in \Z}$. Then
\begin{enumerate}[\rm (1)]
\item
$N_i  = \m^{i-s}N$ for all $i \geq 0$.
\item
$\rank F_1 = a$.
\item
$v(\phi) = s$.
\item
The sequence
$G_\m(F_1)(-s) \xrightarrow{\ini(\phi)} G_\m(F_0) \rt G_\m(M) \rt 0$  can be extended to a minimal resolution of $G_\m(M)$.
\item
$\image \ini(\phi) \cong G_\m(N)(-s)$.
\end{enumerate}
\end{lemma}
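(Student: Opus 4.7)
The plan is to start from the short exact sequence $0 \to N \to F_0 \to M \to 0$, equip $N$ with the induced filtration $\Fc = \{N_i\}$, and use that $F_0 \twoheadrightarrow M$ is strict with respect to the $\m$-adic filtrations on both sides, yielding an exact sequence of graded $R$-modules
\[
0 \to G_\Fc(N) \to G_\m(F_0) \to G_\m(M) \to 0.
\]
Since $\rank F_0 = \mu_A(M) = \mu_R(G_\m(M))$, the right-hand map is a minimal cover, so $G_\Fc(N)$ is exactly the first syzygy module of $G_\m(M)$ in its minimal resolution. The hypothesis then says $G_\Fc(N)$ is minimally generated by $a$ elements, all lying in degree $s$.

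Parts (3) and (2) now drop out. From $(G_\Fc(N))_i = N_i/N_{i+1} = 0$ for $i < s$ we get $N = N_s \subseteq \m^s F_0$, and from $(G_\Fc(N))_s \neq 0$ we get $N \not\subseteq \m^{s+1} F_0$; hence $v(\phi) = s$, which is (3). For (2), lift a $k$-basis $\bar n_1, \dots, \bar n_a$ of $(G_\Fc(N))_s$ to elements $n_1, \dots, n_a \in N$. A successive-approximation argument, using that $A$ is complete and that $\bigcap_i N_i \subseteq \bigcap_i \m^i F_0 = 0$ by Krull's intersection theorem, shows every $x \in N$ can be written as $\sum a_j n_j$ with $a_j \in A$; hence $\mu_A(N) \leq a$. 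Conversely, $\m N \subseteq \m^{s+1} F_0 \cap N = N_{s+1}$ gives a surjection $N/\m N \twoheadrightarrow N/N_{s+1} = (G_\Fc(N))_s$, so $\mu_A(N) \geq a$, and therefore $\beta_1 = \rank F_1 = a$.

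A refinement of the same approximation argument handles (1): for $x \in N_i$ with $i \geq s$, the class $\bar x \in (G_\Fc(N))_i$ is an $R_{i-s}$-linear combination of the $\bar n_j$, so one may write $x = \sum b_j n_j + x'$ with $b_j \in \m^{i-s}$ and $x' \in N_{i+1}$; iterating and summing the resulting Cauchy series in $A$ exhibits $x$ as an element of $\m^{i-s} N$, and the reverse inclusion is trivial. Part (5) is then formal: by (1), $G_\Fc(N) \cong G_\m(N)(-s)$ by a degree shift, and the generators $\bar n_j$ of $G_\Fc(N)$ are the images under $\ini(\phi)$ of a basis of $G_\m(F_1)(-s)$, so $\image \ini(\phi) = G_\Fc(N) \cong G_\m(N)(-s)$. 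Finally, (4) follows because $G_\m(F_0) \twoheadrightarrow G_\m(M)$ and $G_\m(F_1)(-s) \twoheadrightarrow G_\Fc(N)$ are both minimal covers (the ranks $\beta_0$ and $\beta_1 = a$ agree with the minimal numbers of generators of the target on each side), so the two-term complex is the start of a minimal resolution of $G_\m(M)$, extendable by any minimal resolution of $\ker \ini(\phi)$. The main technical point is the successive-approximation / completeness argument used to establish (1) and the generation statement in (2); once this standard device is in place, the remaining assertions reduce to bookkeeping.
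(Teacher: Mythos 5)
Your proof is correct, and its overall skeleton matches the paper's: both start from the exact sequence $0 \to G_\Fc(N)\to G_\m(F_0)\to G_\m(M)\to 0$, use minimality of $G_\m(F_0)\to G_\m(M)$ to identify $G_\Fc(N)$ with the first syzygy of $G_\m(M)$ (hence generated by $a$ elements in degree $s$), and reduce (2)--(5) to the key identity $N_i=\m^{i-s}N$. The genuine divergence is in how that identity is proved. You lift a basis of $(G_\Fc(N))_s$ to elements of $N$ and run a successive-approximation argument, summing a Cauchy series in the complete ring $A$ and using Krull's intersection theorem to kill the remainder. The paper instead stays at the level of the filtration: generation in degree $s$ gives $N_{s+j}=\m^jN+N_{s+j+1}$, the Artin--Rees ($\m$-stability) property of $\Fc$ supplies a $j_0$ with $N_{s+j+1}=\m N_{s+j}$ for $j\ge j_0$, Nakayama settles that range, and a descending induction finishes. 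The paper's route needs no completeness and works over any Noetherian local ring; yours exploits the completeness of $A=K[[X_1,\dots,X_n]]$, which is available here and makes (2) fall out more explicitly (the paper just quotes $\mu(G_\m(N))=\mu(N)$ for a one-line proof of (2)). For (4) the paper verifies exactness of $\m^{i-s}F_1\to\m^iF_0\to\m^iM\to 0$ and tensors with $A/\m$, whereas you argue via minimal covers; to make your version airtight you should note that no column of $\phi$ lies in $\m^{s+1}F_0$ (else a column of $\ini(\phi)$ would vanish), which follows from $N_{s+1}=\m N$ together with minimality of $F_1\to N$, so the images of a basis of $G_\m(F_1)(-s)$ under $\ini(\phi)$ really do generate $G_\Fc(N)$.
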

\begin{proof}
(1) It is well-known that $\Fc$ is an $\m$-stable 
filtration on $N$. Furthermore we have an exact sequence
\begin{equation*}
0 \rt G_{\Fc}(N) \rt G_\m(F_0) \rt G_\m(M) \rt 0. \tag{*}
\end{equation*}
By our assumption it follows that $G_\Fc(N)$ is generated in degree $s$. So we have $N_s = N$ and $N_{s+j} = \m^j N + N_{s+j+1}$ 
for all $j \geq 1$. As $\Fc$ is $\m$-stable there exists $j_0$ such that $N_{s + j + 1} = \m N_{s+j}$ for all $j \geq j_0$. Fix $j \geq j_0$. Then  $N_{s+j} = \m^j N + N_{s+j+1} = \m^l N + \m N_{s+j}$. So by Nakayama Lemma $N_{s+j} = \m^jN$. 
We now show by descending induction that $N_{s+j} = \m^j N$ for all $j \leq j_0$. This is true for $j = j_0$ by the previous argument. Assume $N_{s+j+1} = \m^{j+1} N$ for some $j \leq j_0 -1$. Then notice
\[
\m^{j+1}N \subseteq \m N_{s+j} \subseteq N_{s+j+1}  = \m^{j+1}N.
\]
So we have $N_{s+j+1} = \m N_{s+j}$. As $N_{s+j} = \m^j N + N_{s+j+1} = \m^j N + \m N_{s+j}$, by Nakayama Lemma we get that
$N_{s+j} = \m^j N$.

(2) As $R^a(-s) \rt G_\m(N)(-s) \rt 0$ is minimal we have $a = \mu(G_\m(N)) = \mu(N) = \rank F_1$.

(3) Set $r = v(\phi)$. By \ref{basic-prop} we have a complex
\[
G_\m(F_1)(-r) \xrightarrow{\ini(\phi)} G_\m(F_0) \xrightarrow{\epsilon} G_\m(M) \rt 0.
\]
So $\ker \epsilon$ contains an element of degree $r$. So $s \leq r$.
Furthermore note that $N \subseteq \m^r F_0$. So $N_j = \m^j F_0 \cap N = N$ for $j \leq r$. It follows that $s \geq r$. Thus $s = r$.

(4) We first show that the complex is exact.
\begin{equation*}
\m^{i - s} F_1 \xrightarrow{\phi_{i-s}} \m^i F_1 \xrightarrow{\epsilon_i} \m^i M \rt 0 \quad \text{is exact for all} \ i \geq 0. 
\tag{**}
\end{equation*}
Here $\phi_{i-s}$ is the restriction of $\phi$ to $\m^{i-s}F_1$.
Notice $\ker \epsilon_i = N \cap \m^i F = N_i = \m^{i-s} N$ and the map $\phi_{i-s}$ maps  $\m^{i-s}F_1$ surjectively  to $\m^{i-s} N$.
We now tensor the exact sequence (**) with $A/\m$ to get the exact sequence
\[
\frac{\m^{i - s} F_1}{\m^{i + 1 - s} F_1}  \xrightarrow{\ov{\phi_{i-s}}} \frac{\m^{i} F_0}{\m^{i + 1 } F_0} \rt \frac{\m^{i}M}{\m^{i+1}M} \rt 0,
\]
for all $i \geq 0$. Thus we have an exact sequence
\begin{equation*}
G_\m(F_1)(-s) \xrightarrow{\ov{\phi}} G_\m(F_0) \rt G_\m(M) \rt 0. \tag{$\dagger$}
\end{equation*}
It can be easily verified that $\ov{\phi} = \ini(\phi)$. Furthermore as $G_\m(F_1) = R^a$ we get that $(\dagger)$ is part of a minimal resolution of $G_\m(M)$.

(5) This follows from the exact sequence (*) and (1).
\end{proof}
We now give 
\begin{proof}[Proof of Theorem \ref{first}]
We prove the result by induction on $p = \projdim M$. The result clearly holds when $p = 0$.  Furthermore when $p = 1$ the result follows from Lemma \ref{f-l}. We assume the result when $p = r \geq 1$ and prove it when $p = r + 1$.
Set $N = \Syz^A_1(M)$ and let $v(\phi_1) = s_1$. Then by Lemma \ref{f-l} we get
\[
\Syz^R_1(G_\m(M)) = G_\m(N)(-s_1).
\]
It follows that $G_\m(N)$ has a pure resolution. Truncate $\F$ to obtain a minimal resolution
$\F^\prime$ of $N$. By induction hypotheses $\ini(\F^\prime)$ is a minimal resolution of $G_\m(N)$. After shifting $\ini(\F^\prime)$ suitably and by Lemma \ref{f-l} we get that $\ini(\F)$ is a minimal resolution of $G_\m(M)$.
\end{proof}
\section{Proof of Theorem \ref{pure}}
In this section we give a proof of our main result.
We need the following  result due to Herzog-K\"{u}hl (see \cite[Theorem 1]{HK}) regarding modules having pure resolutions.
\begin{lemma}\label{CM-l}
Let $R = k[X_1,X_2, \cdots, X_n]$ and suppose $E$ is a finitely generated graded $R$-module with pure resolution of type
$(0,d_1,d_2,\cdots,d_p)$. Set $\beta_i = \beta_i(E)$. If for $i \geq 1$
\[
\beta_i = (-1)^{i+1}\beta_0 \prod_{j \neq i}\frac{d_j}{d_j-d_i},
\]
then $E$ is a \CM \ $R$-module.
\end{lemma}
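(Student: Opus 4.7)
The plan is to show that the Herzog--K\"uhl identities force the Hilbert series of $E$ to have a pole of order exactly $n-p$ at $t=1$, and then to conclude via the Auslander--Buchsbaum formula. Since $E$ admits a pure (minimal) resolution of length $p$, we have $\projdim_R E = p$, and Auslander--Buchsbaum gives $\depth_R E = n - p$. Because $\depth E \le \dim E$ always holds, it suffices to establish the reverse inequality $\dim E \le n - p$; together these force $\dim E = \depth E$, i.e., $E$ is Cohen--Macaulay.

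From the pure resolution, the Hilbert series of $E$ is
\[
H_E(t) = \frac{Q(t)}{(1-t)^n}, \qquad Q(t) = \sum_{i=0}^p (-1)^i \beta_i t^{d_i},
\]
with $d_0 = 0$. Since $\dim E$ equals the order of the pole of $H_E(t)$ at $t=1$, the inequality $\dim E \le n - p$ is equivalent to $(1-t)^p \mid Q(t)$, i.e., $Q^{(k)}(1) = 0$ for $k = 0, 1, \ldots, p-1$. Because $\{1,\, x,\, x(x-1),\, x(x-1)(x-2),\, \ldots\}$ is a basis of polynomials, these vanishing conditions are equivalent to the power-sum identities
\[
\sum_{i=0}^p (-1)^i \beta_i d_i^k = 0, \qquad k = 0, 1, \ldots, p-1,
\]
using the convention $0^0 = 1$.

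The central computational step is to verify these identities from the assumed formula. Substituting $\beta_i = (-1)^{i+1}\beta_0 \prod_{j\neq i} d_j/(d_j - d_i)$ for $i \ge 1$, I would recognize $\prod_{j\neq i} d_j/(d_j - d_i)$ as $L_i(0)$, where $L_i(x) = \prod_{j\neq i}(x - d_j)/(d_i - d_j)$ is the $i$-th Lagrange basis polynomial at the distinct positive nodes $d_1, \ldots, d_p$. The $k$-th power-sum identity then reduces to $0^k = \sum_{i=1}^p d_i^k L_i(0)$, which asserts that the Lagrange interpolant of $x^k$ at the nodes $d_1, \ldots, d_p$, evaluated at $x=0$, returns $0^k$. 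This is automatic for $k \le p-1$ since the interpolant reproduces any polynomial of degree less than $p$ exactly.

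Combining $\dim E \le n-p$ with $\dim E \ge \depth E = n - p$ then gives $\dim E = n - p = \depth E$, so $E$ is Cohen--Macaulay. The main obstacle is not conceptual but rather the bookkeeping of signs: one has to match the product $\prod_{j\neq i} d_j/(d_j - d_i)$ to $L_i(0)$ correctly (a collection of $p-1$ sign flips appears), and one has to confirm that the change of basis from derivatives $Q^{(k)}(1)$ to power sums $\sum (-1)^i \beta_i d_i^k$ indeed preserves the full list of vanishing conditions for $k = 0, \ldots, p-1$.
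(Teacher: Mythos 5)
Your proof is correct, but note that the paper does not actually prove this lemma at all: it is quoted verbatim as \cite[Theorem 1]{HK} (Herzog--K\"uhl), so there is no internal argument to compare against. What you have written is essentially a correct reconstruction of the standard Herzog--K\"uhl argument. The two ``bookkeeping'' worries you flag at the end both resolve cleanly. First, $L_i(0)=\prod_{j\neq i}\frac{0-d_j}{d_i-d_j}=\prod_{j\neq i}\frac{d_j}{d_j-d_i}$ exactly, with no residual sign, since each factor $\frac{-d_j}{d_i-d_j}$ equals $\frac{d_j}{d_j-d_i}$; hence $\beta_i=(-1)^{i+1}\beta_0 L_i(0)$ and the alternating sum $\sum_{i=0}^p(-1)^i\beta_i d_i^k=\beta_0\bigl(0^k-\sum_{i=1}^p d_i^kL_i(0)\bigr)$ vanishes for $0\le k\le p-1$ by exactness of Lagrange interpolation on polynomials of degree less than $p$ (the $d_i$ are distinct because the resolution is minimal, so the nodes are legitimate). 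Second, $Q^{(k)}(1)=\sum_i(-1)^i\beta_i\,d_i(d_i-1)\cdots(d_i-k+1)$ is the alternating sum against the falling factorial $x^{\underline{k}}$, and since $\{x^{\underline{0}},\dots,x^{\underline{m}}\}$ and $\{1,x,\dots,x^m\}$ are related by a unitriangular change of basis, the simultaneous vanishing for $k=0,\dots,p-1$ in one basis is equivalent to that in the other. Combined with $\projdim_R E=p$ (minimality of the pure resolution), Auslander--Buchsbaum, and the identification of $\dim E$ with the pole order of $H_E$ at $t=1$, the argument is complete. The only cosmetic caveat is that one should assume $E\neq 0$ so that $\beta_0>0$ and the pole-order characterization of dimension applies.
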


We now give a proof of our main result:
\begin{proof}[Proof of Theorem \ref{pure}]
If $G_\m(M)$ has a pure resolution then by Theorem \ref{first} the complex $\ini(\mathbb{F})$ is a minimal resolution of $G_\m(M)$. The result now follow from 
\cite[p.\ 88]{BS}.

Conversely if $\ini(\mathbb{F})$ is acyclic and the betti -numbers satisfy the Herzog-K\"{u}hl conditions then by Lemma \ref{CM-l},  $E = \coker \ini(\phi_1) $ is \CM \ of dimension $n-p = \dim M$. Note we also have a surjective homomorphism $\epsilon \colon  G_\m(F_0) \rt G_\m(M)$ with $\epsilon \circ \ini(\phi_1) = 0$. So we have an exact sequence 
\[
0 \rt K \rt E \rt G_\m(M) \rt 0.
\] 
Note $\dim K \leq \dim E = \dim  G_\m(M)$.  As multiplicity of $E$ equals multiplicity of $G_\m(M)$ it follows that $\dim K < \dim E$. But $E$ is \CM. Therefore $K = 0$. So $G_\m(M) = E$ has a pure resolution.
\end{proof}
\section{An example}
All the computations  in this section were done using the computer algebra package SINGULAR. Let $Q = k[x, y, z]_\m$ where $\m = (x,y,z)$. Also $A = \widehat{Q}$ and $R = G_\m(A) = G_\m(Q) = k[x,y,z]$. Using SINGULAR one can verify that we have an exact sequence
\[
0 \rt Q^2 \xrightarrow{\psi} Q^4 \xrightarrow{\phi} Q^2 \rt M \rt 0,
\]
where
\[
\phi = \left( \begin{matrix} y^2 & x^2 & z^3 & 0 \\ 0 & 0 & z^2 & x^2  \end{matrix} \right) \quad \text{and} \quad \psi = \left( \begin{matrix} -x^2 & 0 \\ y^2 & z^3 \\0 & -x^2 \\ 0 & z^2 \end{matrix} \right). 
\]
Notice that the zeroth Fitting ideal of $M$ is 
\[
\Fitt_0(M) = I_2(\phi) = ( y^2z^2, y^2x^2, x^2z^2, x^4, z^3x^2).
\]
It is well-known that $\Fitt_0(M) \subseteq \ann(M)$ and that $\sqrt{\Fitt_0(M)} = \sqrt{\ann(M)}$. If $P$ is a prime containing $\Fitt_0(M)$ then clearly $P \supseteq (x,y)$ or $P \supseteq (x,z)$. Thus $\dim M = 1$. Therefore  $M$ is \CM.

Notice
\[
\ini(\phi) = \left( \begin{matrix} y^2 & x^2 & 0 & 0 \\ 0 & 0 & z^2 & x^2  \end{matrix} \right) \quad \text{and} \quad \ini(\psi) = \left( \begin{matrix} -x^2 & 0 \\ y^2 & 0 \\0 & -x^2 \\ 0 & z^2 \end{matrix} \right). 
\]
Using SINGULAR or the Buchsbaum-Eisenbud criterion it follows that  the complex
\begin{equation*}
0 \rt R(-4)^2 \xrightarrow{\ini(\psi)} R(-2)^4 \xrightarrow{\ini(\phi)} R^2 \rt 0 \tag{*}
\end{equation*}
is acyclic. 

Set $T = Q/(x^4, y^2z^2)$. Then $M$ is a \CM \ $T$-module. Also note that $T$ is \CM \ of dimension one with $e(T) = 16$. We have an exact sequence
\[
T^4 \xrightarrow{\ov{\phi}} T^2 \rt M \rt 0.
\]
Let $N = \Syz^T_1(M)$. Then using SINGULAR one can check that $e(N) = 24$. Thus $e(M) = 8$. Therefore by Theorem \ref{pure} we get that (*) is a minimal resolution of $G_\m(M)$.

\begin{remark}
Although in this example $p = 2$ and $\dim M = 1$, this is the first non-trivial example where we have an explicit resolution of $G_\m(M)$.
\end{remark}

\end{document}